\title{ An Order Theoretic Approach to  the   Banach Contraction Principle in  Modular  Spaces \\[0.3cm]}
\author{{Kourosh Nourouzi  \thanks {e-mail: nourouzi@kntu.ac.ir; fax: +98 21
22853650}}\\[0.4cm]
{  Department of Mathematics,  K. N. Toosi University of Technology,}\\
{\em P.O. Box 16315-1618, Tehran, Iran.}\\}
\newenvironment{proof}{\noindent {\em {Proof .}}}{$\square$
\medskip}
\newtheorem{defin}{Definition}
\newtheorem{thm}{Theorem}
\begin{document}

\maketitle \begin{abstract} In the present note,  the Banach
contraction principle is proved in complete modular spaces  via an order theoretic approach.

\end{abstract}

\renewcommand{\baselinestretch}{1.1}
\def\thefootnote{ \ }

\footnotetext{{\em} $2010$ Mathematics Subject Classification.
Primary: 47H10, 46E30
\\
\indent {\em Key words}: Modular space; Fixed point}
\vspace{1.0cm}
\section{Introduction}
A modular formulation of Banach contraction principle was given
in \cite{khamsi, han} under the
 superfluous assumptions $\Delta_2$-condition and $s$-convexity as follows:

\begin{thm}{\rm \cite{khamsi}}\label{kham}
Let $\rho$ be a  function modular  satisfying $\Delta_2$-condition
and $G$ be  a $\|\cdot\|_\rho$-closed subset of
$\mathcal{X}_\rho$. If $T:G\rightarrow G$ is a mapping satisfying
\begin{equation}
\exists \,c\in [0,1) : \rho(Tx-Ty)\leq c\rho(x-y)\,\,\,\,\,\,\,\,\
(x,y\in G),
\end{equation}
and $\sup_n \rho(2T^n x)<\infty$ for
some $x\in G$, then $T$ has a fixed point.

\end{thm}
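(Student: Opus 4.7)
\medskip
\noindent\textbf{Proof plan.} The strategy is the standard Picard iteration adapted to the modular setting. Starting from a base point $x_0\in G$ for which $M:=\sup_n \rho(2T^n x_0)$ is finite, form the orbit $x_n:=T^n x_0$. I would aim first for the intermediate statement that $\{x_n\}$ is $\rho$-Cauchy, then upgrade this to norm convergence via the $\Delta_2$-condition, and finally verify the fixed-point equation.

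The central device is to avoid telescoping $x_n-x_m$ as a sum, which in the modular setting has no clean triangle inequality and forces factors of $2^k$ through the modular axiom $\rho(\alpha u+\beta v)\leq \rho(u)+\rho(v)$ (valid for $\alpha+\beta=1$, $\alpha,\beta\geq 0$). Instead, for $m\geq n$ I would write $x_n-x_m=T^n x_0-T^n x_{m-n}$ and apply the contraction $n$ times inside $\rho$ to obtain
\[
\rho(x_n-x_m)\leq c^n\,\rho(x_0-x_{m-n}).
\]
The orbit-boundedness hypothesis then handles the right-hand factor: decomposing $x_0-x_k=\tfrac{1}{2}(2x_0)+\tfrac{1}{2}(-2x_k)$ and applying the same modular axiom gives $\rho(x_0-x_k)\leq 2M$ uniformly in $k$, so $\rho(x_n-x_m)\leq 2M c^n\to 0$. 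The main technical obstacle I would anticipate is precisely obtaining Cauchyness without an exploding constant, and the swap $T^n x_0-T^n x_{m-n}$ is the key to sidestepping it.

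Once $\{x_n\}$ is $\rho$-Cauchy, the $\Delta_2$-condition is invoked to convert this into $\|\cdot\|_\rho$-Cauchyness; completeness of $\mathcal{X}_\rho$ together with norm-closedness of $G$ produces a limit $x^\ast\in G$, and passing through $\Delta_2$ again yields $\rho(x_n-x^\ast)\to 0$. To identify $x^\ast$ as a fixed point, I would estimate $\rho\bigl(\tfrac{1}{2}(Tx^\ast-x^\ast)\bigr)$ by splitting through $x_{n+1}=Tx_n$ with the modular axiom, bounding one summand by the contraction $c\,\rho(x^\ast-x_n)$ and the other by $\rho(x_{n+1}-x^\ast)$, and letting $n\to\infty$ to force $\rho\bigl(\tfrac{1}{2}(Tx^\ast-x^\ast)\bigr)=0$, hence $Tx^\ast=x^\ast$.
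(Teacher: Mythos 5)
Your proposal is correct, and the swap $x_n-x_m=T^nx_0-T^nx_{m-n}$, giving $\rho(x_n-x_m)\leq c^n\rho(x_0-x_{m-n})\leq c^n\bigl(\rho(2x_0)+\rho(2x_{m-n})\bigr)$, is indeed the right device for obtaining $\rho$-Cauchyness without the exploding constants that a telescoping sum would produce; the only cosmetic issue is that if the supremum in the hypothesis ranges over $n\geq1$ you should anchor the estimate at $x_1$ rather than $x_0$ (and note that the completeness of $\mathcal{X}_\rho$ you invoke is part of the standing framework for function modulars rather than an explicit hypothesis). However, this is a genuinely different route from the paper's. The statement you proved is the cited Theorem~\ref{kham} from \cite{khamsi}, which the paper does not prove at all; its own contribution, Theorem~\ref{BCPM}, subsumes it while discarding the $\Delta_2$-condition (replacing it by the Fatou property), and is proved by an order-theoretic argument: one forms the family $\tau$ of subsets $\mathcal{M}\subseteq\mathcal{X}_\rho\times[0,\infty)$ that contain a full orbit $\{(T^nb,c^n\beta)\}$ and satisfy $\rho(x-y)\leq|\alpha-\beta|$ for all pairs of members, extracts by Zorn's lemma a maximal $\mathcal{P}$, observes that $\mathcal{P}$ is totally ordered by $(x,\alpha)\preceq(y,\beta)$ iff $\rho(x-y)\leq\alpha-\beta$, uses completeness and the Fatou property to produce a maximum $(x_0,\alpha_0)$ of $\mathcal{P}$, and then compares the orbit $(T^nb,c^n\beta)\preceq(x_0,\alpha_0)$ to force $\alpha_0=0$ and $Tx_0=x_0$. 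Your approach buys an explicit, constructive Picard iteration at the price of needing $\Delta_2$ to pass between $\rho$-convergence and $\|\cdot\|_\rho$-convergence (which is what lets you use norm-closedness of $G$ and identify the limit); the paper's approach gives up constructivity (it relies on Zorn's lemma) but eliminates $\Delta_2$ altogether, which is precisely the point of the note. Your argument is closer in spirit to the alternative proof sketched in the paper's Remark under the stronger $\Delta_2$-type condition, though even there the mechanism (an entourage-style induction on $p$) differs from your direct geometric-series estimate.
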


\begin{thm}{\rm \cite{han}}\label{hane}
Let $\mathcal{X}_\rho$ be a $\rho$-complete modular space. Suppose further that $\rho$  is an  $s$-convex modular  satisfying   $\Delta_2$-condition and has the
Fatou property. If $G$ is $\rho$-closed in $\mathcal{X}_\rho$ and
$T:G\rightarrow G$ is a mapping satisfying
\begin{equation}
\rho(c(Tx-Ty))\leq k^s\rho(x-y)\,\,\,\,\,\,\,\,\ (x,y\in G)
\end{equation}
for some $c,k\in \mathbb{R}^+$ with $c>\max\{1,k\}$, then $T$ has
a fixed point.
\end{thm}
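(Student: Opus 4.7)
The plan is to mimic the classical Banach argument, producing the fixed point as the limit of Picard iterates, while using $s$-convexity, $\Delta_2$, and Fatou to transport the estimates through the modular. Fix $x_0\in G$ and set $x_n=T^n x_0\in G$. By the contraction hypothesis together with $s$-convexity (which yields $\rho(y)\le c^{-s}\rho(cy)$ since $c>1$), one obtains
$$\rho(c(x_{n+1}-x_n))\le k^s\rho(x_n-x_{n-1})\le (k/c)^s\,\rho(c(x_n-x_{n-1})),$$
and iterating, $\rho(c(x_{n+1}-x_n))\le (k/c)^{sn}\rho(c(x_1-x_0))\to 0$ geometrically, because $k<c$.

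Next I would show that $\{x_n\}$ is $\rho$-Cauchy. The idea is to decompose $x_m-x_n$ by $s$-convexity: for $m>n$, choose positive weights $\alpha_i$ with $\sum_i \alpha_i^s=1$ and write
$$\rho(x_m-x_n)\le\sum_{i=n}^{m-1}\alpha_{i-n}^s\,\rho\!\left(\alpha_{i-n}^{-1}(x_{i+1}-x_i)\right),$$
then absorb the dilations $\alpha_i^{-1}$ via $\Delta_2$, which bounds $\rho$ at any positive scale by a constant multiple of $\rho$ at a fixed scale. By Step~1, the right-hand side is the tail of a convergent geometric series, hence $\rho(x_m-x_n)\to 0$. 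The $\rho$-completeness of $\mathcal{X}_\rho$ together with the $\rho$-closedness of $G$ then yields a limit $x^*\in G$ with $\rho(x_n-x^*)\to 0$.

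Finally, I would verify $Tx^*=x^*$ via the Fatou property. Because $\Delta_2$ holds, $\rho$-convergence is preserved under positive scaling, so $c(Tx^*-x_{n+1})\to c(Tx^*-x^*)$ in the $\rho$-sense. Fatou then gives
$$\rho(c(Tx^*-x^*))\le\liminf_n\rho(c(Tx^*-Tx_n))\le\liminf_n k^s\rho(x^*-x_n)=0,$$
and one further use of $s$-convexity, $\rho(Tx^*-x^*)\le c^{-s}\rho(c(Tx^*-x^*))$, forces $\rho(Tx^*-x^*)=0$, i.e.\ $Tx^*=x^*$.

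I expect Step~2 to be the main obstacle: transferring the one-step decay $\rho(c(x_{n+1}-x_n))\to 0$ into the full Cauchy bound $\rho(x_m-x_n)\to 0$ is where $s$-convexity and $\Delta_2$ must cooperate precisely, keeping the dilation factor $c>1$ coherent across the telescoping. This rigid interplay is exactly what makes both hypotheses hard to dispense with on the classical route, and motivates the paper's order-theoretic alternative.
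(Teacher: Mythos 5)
First, a point of orientation: the paper does not prove Theorem~\ref{hane} at all. It is quoted from \cite{han} as background, and the paper's own contribution, Theorem~\ref{BCPM}, is proved by a completely different order-theoretic (Zorn's lemma) argument designed precisely to dispense with the $s$-convexity and $\Delta_2$ hypotheses you are leaning on. So your proposal is to be judged as a reconstruction of the classical Picard-iteration route of \cite{han}. Within that route, your Step~1 is correct ($\rho(ax)\leq a^s\rho(x)$ for $a\leq 1$ does give $\rho(y)\leq c^{-s}\rho(cy)$, hence the geometric decay $\rho(c(x_{n+1}-x_n))\leq (k/c)^{sn}\rho(c(x_1-x_0))$, modulo the small caveat that $\rho(c(x_1-x_0))$ must be checked finite since a modular may take the value $+\infty$), and Step~3 is routine once Step~2 is in place.

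The genuine gap is exactly the one you flag in Step~2, and it is not a technicality you can wave through. The $\Delta_2$-condition assumed in the theorem is the one of Definition~\ref{dovom}(d): the purely sequential implication $\rho(x_n)\to 0\Rightarrow\rho(2x_n)\to 0$, which carries no uniform constant. Your claim that $\Delta_2$ ``bounds $\rho$ at any positive scale by a constant multiple of $\rho$ at a fixed scale'' is the strictly stronger $\Delta_2$-\emph{type} condition of the paper's Remark ($\exists\,\alpha>0$ with $\rho(2x)\leq\alpha\rho(x)$ for all $x$), which is not among the hypotheses; under the sequential version there is simply no inequality of the form $\rho(\lambda y)\leq C(\lambda)\rho(cy)$ to invoke for a single vector $y$. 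Moreover, even if one grants a uniform doubling constant $K$, your decomposition forces $\sum_i\alpha_i^s\leq 1$, hence $\alpha_i\to 0$ and $\alpha_i^{-1}\to\infty$, so the $i$-th term requires on the order of $\log_2(\alpha_i^{-1}/c)$ applications of doubling and picks up a factor $K^{\log_2(\alpha_i^{-1}/c)}$ growing geometrically in $i$; you must then show this is beaten by the decay $(k/c)^{si}$ by tuning the weights, a competition your sketch never runs. As written, the $\rho$-Cauchyness of $\{x_n\}$ is not established and the proof is incomplete. This difficulty is, in effect, the paper's point: Theorem~\ref{BCPM} avoids the telescoping altogether via the maximal-element construction, and the Remark shows that the Picard argument closes cleanly only when a genuinely uniform doubling constant is assumed.
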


In this note,  via an order theoretic approach, the modular version of  Banach contraction principle is
 proved. The result also answers, in a more general form,  an open question posed in \cite{han}.

We commence some basic notions of modular spaces used in theorems above. For more
details, the reader is asked to refer to \cite{koz, mus}. Some recent works on modular spaces may also be found in \cite{k1, k2, k3, k4}.

\begin{defin}\label{dov}{\rm A modular on a linear space $\mathcal{X}$ is a  functional  $\rho:\mathcal{X}\rightarrow [0,\infty]$ satisfying  the  conditions:\\
\indent 1) $\rho(x)=0$ if and only if $x=0$,\\
\indent 2) $\rho(x)=\rho(-x)$,\\
\indent 3) $\rho(\alpha x+\beta y)\leq \rho(x)+\rho(y)$, for all
$x,y\in
\mathcal{X}$ and $\alpha, \beta \geq 0$, $\alpha +\beta=1$. \\
\noindent Then, the  vector subspace
$$\mathcal{X}_\rho = \{x\in X: \rho(\alpha x)\rightarrow 0
\,\,\,\rm{as} \,\,\,\alpha\rightarrow0\},$$
 of
$\mathcal{X}$ is called a modular space. \noindent If the
condition (3) is replaced by
$$\rho(ax+by)\leq a^s\rho(x)+b^s\rho(y)$$ for all $x,y\in \mathcal{X}$ and
all $a,b\geq0$ satisfying $a^s+b^s=1$, where $s\in(0,1]$, then the
modular $\rho$ is called an $s$-convex modular on $\mathcal{X}$.}
\end{defin}

For every modular space $\mathcal{X}_\rho$ an $F$-norm $\|\cdot\|_\rho$ can be associated as: $$\|x\|_\rho=\inf \{t>0:\rho(t^{-1}x)\leq t\}, \,\,\,\,\,\,\,\,\, (x\in \mathcal{X}_\rho).$$
\begin{defin}\label{dovom}{\rm  Let $\mathcal{X}_{\rho}$ be a modular space. \\
\noindent (a) A sequence  $\{x_n\}_{n=1}^\infty$ in modular space
$\mathcal{X}_{\rho}$ is called\\
\indent (i) $\rho$-convergent to $x\in
\mathcal{X}_{\rho}$  if $\rho(x_n-x)\rightarrow 0$ as $n\rightarrow \infty$;\\ \indent (ii) $\rho$-Cauchy  if $\rho(x_m-x_n)\rightarrow 0$ as $m,n\rightarrow \infty$;\\
\noindent (b) $\mathcal{\mathcal{X}_\rho}$
is $\rho$-complete if every $\rho$-Cauchy sequence  in
$\mathcal{X}_\rho$ is $\rho$-convergent to a point of $\mathcal{X}_\rho$;\\
(c) A subset $C$ of $\mathcal{X}_\rho$ is said to be $\rho$-closed if it contains the $\rho$-limit of all
its $\rho$-convergent sequences;\\
\noindent (d)  $\rho$ is said to satisfy the $\Delta_2$-condition
if for each sequence $\{x_n\}_{n=1}^\infty$ in $\mathcal{X}_{\rho}$,
 $\rho(x_n)\rightarrow 0$, as $n\rightarrow\infty$ implies that $\rho(2x_n)\rightarrow 0$, as $n\rightarrow\infty$;\\
\noindent (e) The modular $\rho$ has the Fatou property if $$\rho(x-y)\leq \liminf_{n\rightarrow \infty} \rho(x_n-y_n),$$
whenever $\rho(x_n-x)\rightarrow 0$ and $\rho(y_n-y)\rightarrow 0$ as $n\rightarrow\infty$. }
\end{defin}

\section{Banach Contraction Principle}
%


\begin{thm}\label{BCPM}
Let $\mathcal{X}_\rho$ be a  complete modular space. If $\rho$ has the Fatou property,
$T:\mathcal{X}_\rho\rightarrow \mathcal{X}_\rho$ is a mapping
satisfying
\begin{equation}\label{aval}
\exists \,c\in [0,1) : \rho(Tx-Ty)\leq c\rho(x-y)\,\,\,\,\,\,\,\,\
(x,y\in \mathcal{X}_\rho),
\end{equation}
and $\sup_{n\geq 1}\rho(2T^n\omega)<\infty$, for some $\omega\in \mathcal{X}_\rho$, then $T$ has a fixed point.
\end{thm}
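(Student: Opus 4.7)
My plan is to imitate the classical Banach iteration argument starting from the distinguished point $\omega$, with one reordering of steps that sidesteps the usual need for the $\Delta_2$-condition. Setting $x_n:=T^n\omega$ and $M:=\sup_{n\geq 1}\rho(2x_n)<\infty$, the only ``triangle-like'' estimate at my disposal in a non-$\Delta_2$ modular is the one coming from condition $(3)$ of Definition \ref{dov} with $\alpha=\beta=1/2$, namely
\[
\rho(u-v)\le \rho(2u)+\rho(2v)\qquad (u,v\in \mathcal{X}_\rho).
\]

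Next I will show that $\{x_n\}_{n\ge 1}$ is $\rho$-Cauchy. The naive attempt to telescope $\rho(x_m-x_n)$ through the consecutive differences $\rho(x_{k+1}-x_k)$ is doomed, because each telescoping step inserts a factor of $2$ inside $\rho$ that cannot be absorbed without $\Delta_2$ or $s$-convexity. Instead, for $m>n\ge 1$ I apply the contraction (\ref{aval}) exactly $n-1$ times \emph{first}, obtaining
\[
\rho(x_m-x_n)\le c^{\,n-1}\,\rho(x_{m-n+1}-x_1),
\]
and only then invoke the displayed modular estimate to get
\[
\rho(x_{m-n+1}-x_1)\le \rho(2x_{m-n+1})+\rho(2x_1)\le 2M.
\]
Therefore $\rho(x_m-x_n)\le 2Mc^{\,n-1}\to 0$ as $n\to\infty$. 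Performing the contraction before the ``triangle'' rather than after is the key trick, and I expect this to be the main obstacle of the proof: if one did the triangle first, one would be forced to control $\rho\bigl(2(x_{n+1}-x_n)\bigr)$, which is precisely the quantity whose handling motivates $\Delta_2$ in Theorem \ref{kham} and $s$-convexity in Theorem \ref{hane}.

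By $\rho$-completeness there exists $x^\ast\in \mathcal{X}_\rho$ with $\rho(x_n-x^\ast)\to 0$. To identify $x^\ast$ as a fixed point I will apply the Fatou property to the constant sequence with value $Tx^\ast$ and to $y_n:=Tx_n=x_{n+1}$, noting that $y_n\to x^\ast$ in $\rho$ since it is merely a tail of $\{x_n\}$; combined with (\ref{aval}) this gives
\[
\rho(Tx^\ast-x^\ast)\le \liminf_{n\to\infty}\rho(Tx^\ast-Tx_n)\le \liminf_{n\to\infty} c\,\rho(x^\ast-x_n)=0,
\]
so $Tx^\ast=x^\ast$. The order-theoretic flavor advertised in the abstract enters through the potential $\phi(x_n):=\tfrac{2M}{1-c}c^{\,n-1}$, a decreasing Caristi-type Lyapunov function on the orbit: the relation $x\preceq y\iff \rho(x-y)\le \phi(x)-\phi(y)$ is exactly what the contraction-then-triangle estimate delivers along the iteration, and in an order-theoretic write-up the existence of $x^\ast$ would follow from a maximality argument for $(\mathcal{X}_\rho,\preceq)$ instead of the direct Cauchy estimate above.
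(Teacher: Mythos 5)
Your proof is correct, but it is not the proof the paper gives: you have produced a direct, iteration-based argument, whereas the paper's whole point (as its title advertises) is an order-theoretic proof via Zorn's lemma. Concretely, the paper considers the family $\tau$ of subsets $\mathcal{M}\subseteq\mathcal{X}_\rho\times[0,\infty)$ that contain a full $T$-orbit $\{(T^nb,c^n\beta)\}$ and satisfy $\rho(x-y)\le|\alpha-\beta|$ for all pairs $(x,\alpha),(y,\beta)\in\mathcal{M}$; it takes a maximal $\mathcal{P}\in\tau$, observes that $(x,\alpha)\preceq(y,\beta)\iff\rho(x-y)\le\alpha-\beta$ totally orders $\mathcal{P}$, and uses completeness together with the Fatou property to show $\mathcal{P}$ has a maximum $(x_0,\alpha_0)$, from which $\alpha_0=0$ and $Tx_0=x_0$ follow. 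Your key computation --- applying the contraction $n-1$ times \emph{before} invoking $\rho(u-v)\le\rho(2u)+\rho(2v)$, so that $\rho(x_m-x_n)\le c^{n-1}\bigl(\rho(2x_{m-n+1})+\rho(2x_1)\bigr)\le 2Mc^{n-1}$ --- is essentially the same estimate the paper uses only once, to verify that $\tau$ is nonempty (i.e.\ that $\{(T^n\omega,c^n\alpha)\}$ lies in $\tau$); you instead push it all the way to a Cauchy estimate and finish with completeness and a correct application of Fatou to the pair $x_n\equiv Tx^\ast$, $y_n=Tx_n$. What your route buys is brevity and the elimination of the axiom of choice; it also sidesteps a small awkwardness in the paper's nonemptiness step, where $\rho(\omega-T^n\omega)$ is estimated even though the hypothesis only controls $\rho(2T^n\omega)$ for $n\ge 1$ (your indices $m-n+1$ and $1$ are both $\ge 1$, so no such issue arises). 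What the paper's route buys is the methodological content: the maximal-element machinery is a Caristi-type argument that the author explicitly wants to showcase, and your closing remark about the potential $\phi$ correctly identifies that your estimate is exactly the order relation the paper axiomatizes. In short: correct, complete, and more elementary, but a genuinely different proof from the one in the paper.
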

\begin{proof}
   Consider the family $\tau$ consisting of all subsets  $\mathcal{M}$ of $\mathcal{X}_\rho\times [0,\infty)$  such that $$\exists \, (b,\beta)\in \mathcal{M}\, ,\forall \,n\in \mathbb{N}:\,\,\,\, (T^nb,c^n\beta)\in \mathcal{M}, $$
 and
 $$\forall \,(x,\alpha), (y,\beta)\in \mathcal{M}:\, \rho(x-y)\leq |\alpha-\beta|.$$
   $\tau$ is nonempty. In fact, if $\omega$ satisfies $\sup_{n\geq 1}\rho(2T^n\omega)<\infty$, then $$\sup_{n\geq1}\rho(\omega-T^n\omega)<\infty,$$ and  we may choose $\alpha>0$ such that
 $\rho(\omega-T^n\omega)\leq \alpha-c^n\alpha$, for each $n\geq1$. Then, because of the assumption (\ref{aval}) the set consisting of all $(T^n\omega,c^n\alpha)$, $n\geq 0$ belongs
 to $\tau$. It is clear, by Zorn's lemma, that the family $\tau$  has a maximal element $\mathcal{P}$ with respect to partial order $\subseteq$ in $\tau$.
Define the binary relation $\preceq$ in the set
$\mathcal{P}$ by
$$(x,\alpha)\preceq
(y,\beta) \,\,\,\ \mbox{iff} \,\,\,\  \rho(x-y)\leq \alpha -\beta,$$
 where
$x,y\in \mathcal{X}_\rho$ and $\alpha, \beta \in  [0,\infty)$.
Then, $(\mathcal{P},\preceq)$ is a totally ordered set. We show that  $\mathcal{P}$  has a
 maximum element. If $p\in \mathcal{P}$, then there exist   $x_p\in \mathcal{X}_\rho$ and $\alpha_p\in  [0,\infty)$ such that $p=(x_p,\alpha_p)$. Note that  the  net $\{\Lambda_p\}_{p\in \mathcal{P}}$ is increasing, where $\Lambda_p=(x_p,\alpha_p)$.  That is,
\begin{equation} \label{ca} p\preceq q \Rightarrow  \rho(x_p-x_q)\leq \alpha_p -\alpha_q.
\end{equation}
Hence, the net $\{\alpha_p\}_{p\in \Gamma}$ is  decreasing in
$[0,\infty)$. Replace this net by the sequence $\{\alpha_n\}_{n=1}^\infty$  and assume that $\alpha
_n\rightarrow \alpha_0$ as $n\rightarrow \infty$. The inequality (\ref{ca}) shows that the
corresponding sequence $\{x_n\}_{n=1}^{\infty}$ is a $\rho$-Cauchy sequence  in
$\mathcal{X}_\rho$.  Suppose that
$\{x_n\}_{n=1}^{\infty}$ is $\rho$-convergent to $x_0\in
\mathcal{X}_\rho$. Again, from (\ref{ca})  and the Fatou property it follows that
  $$ \forall p\in \mathcal{P}:\,\, \rho(x_p-x_0)\leq \liminf_{m\rightarrow\infty} \rho(x_p-x_m)\leq \alpha_p-\alpha_0.$$
This implies that $(x_p,\alpha_p)\preceq (x_0,\alpha_0)$, for each $p\in \mathcal{P}$ and therefore $(x_0,\alpha_0)$
is the maximum of  $\mathcal{P}$, since $\mathcal{P}$ is maximal.
\\
 Now, let $(b,\beta)$ be an element of $\mathcal{P}$ such that $(T^nb,c^n\beta)\in \mathcal{P}$, for each $n\in \mathbb{N}$. Then,
 $(T^nb,c^n\beta)\preceq (x_0,\alpha_0)$, for each $n\in \mathbb{N}$ implies that $\alpha_0=0$,
 and therefore $\rho(T^nb-x_0)\rightarrow 0$, as $n\rightarrow \infty$. This also
 implies that $\rho(T^{n+1}b-Tx_0)\rightarrow 0$, as $n\rightarrow \infty$. Hence, $Tx_0=x_0$.
%
\end{proof}

\noindent {\bf{Remark}}. A modular $\rho$ is said to satisfy the $\Delta_2$-type
condition if there exists  $\alpha>0$ such that $\rho(2x)\leq
\alpha\rho(x)$ for all $x\in \mathcal{X}_\rho$. The Theorem \ref{BCPM} gets a simpler proof if $\rho$ satisfies the $\Delta_2$-type
condition. To see this, let $k$ be the $\Delta_2$-type constant
of $\rho$. Without loss of generality, we may suppose that  $ck
<\frac{1}{2}$. Otherwise, choose the integer $n$  such that $c^nk
<\frac{1}{2}$ and then replace $T^n$ by $T$. Let $x\in
\mathcal{X}_\rho$; we show that $\{T^n x\}$ is a $\rho$-Cauchy sequence.
Let $\epsilon>0$ be given and $$W_\epsilon=\{(x,y)\in
\mathcal{X}_\rho \times \mathcal{X}_\rho : \rho(x-y)<\epsilon\}.$$
There exists $N\in \mathbb{N}$ such that
$$(T^{n-1}x,T^nx)\in W_\epsilon,$$ for each $n>N$. Fix $n>N$. It
suffices to show that
\begin{equation}\label{cau} (T^nx,T^{n+p}x)\in W_\epsilon,\,\,\,\,\,\,\,\,\,\,\,\,\,\,\,\,\,\,\,\,\,\,(p\in \mathbb{N}).
\end{equation}
By induction on $p$, suppose that (\ref{cau}) is valid for some
fixed $p$. Since $$(T^{n-1}x,T^nx)\in W_\epsilon,
\,\,\,\,\,\,\,\,\,\, (T^nx,T^{n+p}x)\in W_\epsilon$$ we get
$$\rho(T^{n-1}x-T^{n+p}x)\leq
ck\rho(T^{n-1}x-T^nx)+ck\rho(T^{n}x-T^{n+p}x)< \epsilon.$$
Therefore, $$\rho(T^{n}x-T^{n+p+1}x)\leq c\rho(T^{n-1}x-T^{n+p}x)<
\epsilon.$$ Hence, $\{T^n x\}$ is a $\rho$-Cauchy sequence and by
$\rho$-completeness must converge. Let $T^nx\rightarrow y\in
\mathcal{X}_\rho$, as $n\rightarrow\infty$. From (\ref{aval}), it
follows that $T^{n+1}x\rightarrow Ty$, as $n\rightarrow \infty$
and consequently the uniqueness of the $\rho$-limit implies that
$Ty=y$.

The approach given in the proof of Theorem 3 may also be seen in  \cite{fb}. The idea given in the remark above is, in fact, a simple use of uniform space techniques in
fixed point theory (see e.g., \cite{kn1, kn2}).



\end{document}